\documentclass[10pt,reqno]{amsart}

\usepackage[dvips]{graphicx}

\usepackage{charter}
\paperheight=297mm
\paperwidth=210mm
\textwidth=136mm
\textheight=205mm

\usepackage[a4paper,top=46mm,bottom=46mm,left=37mm,right=37mm]{geometry}

\usepackage{latexsym,amsfonts}
\usepackage{amsmath,amssymb,graphics,setspace}
\usepackage{amsthm}
\usepackage{mathrsfs}

\usepackage[T1]{fontenc}%


\makeatletter
\newcommand{\doublewedge}{\big@doubleop{\wedge}}
\newcommand{\big@doubleop}[1]{%
  \DOTSB\mathop{\mathpalette\big@doubleop@aux{#1}}\slimits@
}

\newcommand\big@doubleop@aux[2]{%
  \sbox\z@{$\m@th#1#2$}%
  \makebox[1.35\wd\z@][s]{$\m@th#1#2\hss#2$}%
}

\usepackage{marvosym}
\usepackage{paralist}
\usepackage{color}

\usepackage{marvosym}

\usepackage{pstricks,pst-text,pst-grad,pst-node,pst-3dplot,pstricks-add,pst-poly,pst-coil} 
\usepackage{pst-fun,pst-blur} 

\usepackage{picins,graphicx}
\usepackage{paralist}
\usepackage{color}

\usepackage[verbose]{wrapfig}

\usepackage{subfigure}
\renewcommand{\thesubfigure}{\thefigure.\arabic{subfigure}}
\makeatletter
\renewcommand{\p@subfigure}{}
\renewcommand{\@thesubfigure}{\thesubfigure:\hskip\subfiglabelskip}
\makeatother

%

\usepackage{hyperref}
\hypersetup{linktocpage=true,colorlinks=true,linkcolor=blue,citecolor=blue,pdfstartview={XYZ 1000 1000 1}}

\usepackage{floatflt,graphicx}

\usepackage{graphicx}


\newcommand{\cl}{\mbox{cl}}
\newcommand{\Int}{\mbox{int}}
\newcommand{\bdy}{\mbox{bdy}}




 %



\newtheorem{example}{Example}
\newtheorem{remark}{Remark}

\newtheorem{lemma}{Lemma}
\newtheorem{theorem}{Theorem}


\usepackage{xcolor}
\definecolor{light}{gray}{0.80}

\setcounter{page}{1}

\begin{document}

\title[Descriptive Cellular Homology]{Descriptive Cellular Homology}

\author[M.Z. Ahmad]{M.Z. Ahmad$^{\alpha}$}
\email{ahmadmz@myumanitoba.ca}
\address{\llap{$^{\alpha}$\,}
Computational Intelligence Laboratory,
University of Manitoba, WPG, MB, R3T 5V6, Canada}

\author[J.F. Peters]{J.F. Peters$^{\beta}$}
\email{James.Peters3@umanitoba.ca}
\address{\llap{$^{\beta}$\,}
Computational Intelligence Laboratory,
University of Manitoba, WPG, MB, R3T 5V6, Canada and
Department of Mathematics, Faculty of Arts and Sciences, Ad\.{i}yaman University, 02040 Ad\.{i}yaman, Turkey}
\thanks{The research has been supported by the Natural Sciences \&
Engineering Research Council of Canada (NSERC) discovery grant 185986 
and Instituto Nazionale di Alta Matematica (INdAM) Francesco Severi, Gruppo Nazionale per le Strutture Algebriche, Geometriche e Loro Applicazioni grant 9 920160 000362, n.prot U 2016/000036.}

\subjclass[2010]{Primary 54E05 (Proximity); Secondary 68U05 (Computational Geometry)}

\date{}

\dedicatory{Dedicated to J.H.C. Whitehead and Som Naimpally}

\begin{abstract}
This article introduces descriptive cellular homology on cell complexes, which is an extension of J.H.C. Whitehead's CW topology.   A main result is that a descriptive cellular complex is a topology on fibres in a fibre bundle.    An application of two forms of cellular homology is given in terms of the persistence of shapes in CW spaces
\end{abstract}

\keywords{CW topology, Descriptive, Fibre bundle, Homology}

\maketitle
\tableofcontents

\section{Introduction}
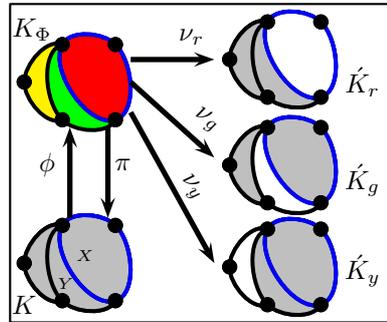
\begin{wrapfigure}{r}{0.45\textwidth}
\begin{minipage}[c]{0.46\textwidth}
\begin{pspicture}%
(0.3,-0.3)(5.4, 3.3)
\psframe[linecolor=black](0.3,-0.7)(5.37,3.55)
\psccurve[linewidth=1.5pt,fillstyle=solid,fillcolor=yellow](0.5,2.5)(1,2)(1,3)
\psccurve[linewidth=1.5pt,fillstyle=solid,fillcolor=green](1,2)(1,3)(1.7,2)
\psccurve[linewidth=1.5pt,fillstyle=solid,fillcolor=red](1,3)(1.7,3)(1.7,2)
\psccurve[linewidth=1.5pt, linecolor=blue,arrows=->](1,3)(1.7,3)(1.7,2)
\psccurve[linewidth=1.5pt,linecolor=black](0.5,0.1)(1,-0.4)(1,0.6)
\psccurve[linewidth=1.5pt,linecolor=black](1,-0.4)(1,0.6)(1.7,-0.4)
\psccurve[linewidth=1.5pt,linecolor=black](1,0.6)(1.7,0.6)(1.7,-0.4)
\psccurve[linewidth=1.5pt,linecolor=black, fillstyle=solid, fillcolor=lightgray](0.5,0.1)(1,-0.4)(1,0.6)
\psccurve[linewidth=1.5pt,linecolor=black,, fillstyle=solid, fillcolor=lightgray](1,-0.4)(1,0.6)(1.7,-0.4)
\psccurve[linewidth=1.5pt,linecolor=black,, fillstyle=solid, fillcolor=lightgray](1,0.6)(1.7,0.6)(1.7,-0.4)
\psccurve[linewidth=1.5pt, linecolor=blue,arrows=->](1,0.6)(1.7,0.6)(1.7,-0.4)
\psccurve[linewidth=1.5pt,linecolor=black](3.2,0.05)(3.7,-0.45)(3.7,0.55)
\psccurve[linewidth=1.5pt,linecolor=black](3.7,-0.45)(3.7,0.55)(4.4,-0.45)
\psccurve[linewidth=1.5pt,linecolor=black](3.7,0.55)(4.4,0.55)(4.4,-0.45)
\psccurve[linewidth=1.5pt,linecolor=black, fillstyle=solid, fillcolor=white](3.2,0.05)(3.7,-0.45)(3.7,0.55)
\psccurve[linewidth=1.5pt,linecolor=black, fillstyle=solid, fillcolor=lightgray](3.7,-0.45)(3.7,0.55)(4.4,-0.45)
\psccurve[linewidth=1.5pt,linecolor=black, fillstyle=solid, fillcolor=lightgray](3.7,0.55)(4.4,0.55)(4.4,-0.45)
\psccurve[linewidth=1.5pt, linecolor=blue,arrows=->](3.7,0.55)(4.4,0.55)(4.4,-0.45)
\psccurve[linewidth=1.5pt,linecolor=black](3.2,1.4)(3.7,0.9)(3.7,1.9)
\psccurve[linewidth=1.5pt,linecolor=black](3.7,0.9)(3.7,1.9)(4.4,0.9)
\psccurve[linewidth=1.5pt,linecolor=black](3.7,1.9)(4.4,1.9)(4.4,0.9)
\psccurve[linewidth=1.5pt,linecolor=black, fillstyle=solid, fillcolor=lightgray](3.2,1.4)(3.7,0.9)(3.7,1.9)
\psccurve[linewidth=1.5pt,linecolor=black, fillstyle=solid, fillcolor=white](3.7,0.9)(3.7,1.9)(4.4,0.9)
\psccurve[linewidth=1.5pt,linecolor=black, fillstyle=solid, fillcolor=lightgray](3.7,1.9)(4.4,1.9)(4.4,0.9)
\psccurve[linewidth=1.5pt, linecolor=blue,arrows=->](3.7,1.9)(4.4,1.9)(4.4,0.9)
\psccurve[linewidth=1.5pt,linecolor=black](3.2,2.8)(3.7,2.3)(3.7,3.3)
\psccurve[linewidth=1.5pt,linecolor=black](3.7,2.3)(3.7,3.3)(4.4,2.3)
\psccurve[linewidth=1.5pt,linecolor=black](3.7,3.3)(4.4,3.3)(4.4,2.3)
\psccurve[linewidth=1.5pt,linecolor=black, fillstyle=solid, fillcolor=lightgray](3.2,2.8)(3.7,2.3)(3.7,3.3)
\psccurve[linewidth=1.5pt,linecolor=black, fillstyle=solid, fillcolor=lightgray](3.7,2.3)(3.7,3.3)(4.4,2.3)
\psccurve[linewidth=1.5pt,linecolor=black, fillstyle=solid, fillcolor=white](3.7,3.3)(4.4,3.3)(4.4,2.3)
\psccurve[linewidth=1.5pt, linecolor=blue,arrows=->](3.7,3.3)(4.4,3.3)(4.4,2.3)
\psdots[dotstyle=o,dotsize=0.2,fillstyle=none,fillcolor=black](0.5,0.1)(1,0.6)(1,-0.4)(1.7,0.6)(1.7,-0.4)
\psdots[dotstyle=o,dotsize=0.2,fillstyle=solid, fillcolor=black](0.5,2.5)(1,3)(1,2)(1.7,3)(1.7,2)
\psdots[dotstyle=o,dotsize=0.2,fillstyle=solid, fillcolor=black](3.2,2.8)(3.7,3.3)(3.7,2.3)(4.4,3.3)(4.4,2.3)
\psdots[dotstyle=o,dotsize=0.2,fillstyle=solid, fillcolor=black](3.2,1.4)(3.7,1.9)(3.7,0.9)(4.4,1.9)(4.4,0.9)
\psdots[dotstyle=o,dotsize=0.2,fillstyle=solid, fillcolor=black](3.2,0.05)(3.7,0.55)(3.7,-0.45)(4.4,0.55)(4.4,-0.45)
\psline[linewidth=2pt,linecolor=black]{<-}(1.6,0.7)(1.6,1.9)
\psline[linewidth=2pt,linecolor=black]{->}(1.1,0.7)(1.1,1.9)
\psline[linewidth=2pt,linecolor=black]{->}(1.9,2.8)(3,2.8)
\psline[linewidth=2pt,linecolor=black]{->}(1.9,2.5)(3,1.5)
\psline[linewidth=2pt,linecolor=black]{->}(1.9,2.1)(3,0.1)
\rput(0.5,-0.45){$K$}
\rput(0.6,3.2){$K_{\Phi}$}
\rput(0.8,1.4){$\phi$}
\rput(1.8,1.4){$\pi$}
\rput(2.7,3.1){$\nu_{r}$}
\rput{-30}(2.9,2){$\nu_{g}$}
\rput{-50}(2.7,1.1){$\nu_{y}$}
\rput(4.95,2.5){$\acute{K}_{r}$}
\rput(4.95,1.2){$\acute{K}_{g}$}
\rput(4.95,0){$\acute{K}_{y}$}
\rput(1.3,0.2){\tiny $X$}
\rput(1.05,-0.17){\tiny $Y$}
\end{pspicture}
\caption{Descriptive Homology Space}
\label{fig:deshom}
\end{minipage}
\end{wrapfigure}

Homology separates cycles (connected paths) from boundaries of holes in shapes in topological spaces.  Descriptive cellular homology provides a means of characterizing and comparing cycles and boundaries in terms of descriptions of cell complexes. 
A \emph{cell complex} $K$ on a space $X$ is a finite collection of subsets of $X$ called cells such as 0-cells (vertices) and 1-cells (open arcs).     
A descriptive cellular homology is an extension of Whitehead Closure-finite Weak (CW) topology~\cite{Whitehead1949BAMS-CWtopology}, which is a form of descriptive topology introduced in~\cite{Naimpally2013}.  A straightforward application of these two forms of homology is the study persistence of shapes in CW spaces.

\section{\textcolor{blue}{Preliminaries}}
A \emph{Hausdorff space} is a nonempty set $K$ in which every pair of distinct elements in $K$ is contained in disjoint open set.   A decomposition of $K$ is a partition of $K$ into subsets called \emph{cells}.   Let  $\cl A, bdy A$ denote the closure and boundary of a cell $A$, respectively.   The interior of a cell $A$ (denoted by $\Int A$) is defined by $\Int A = \cl A- \bdy A$.   A complex with $n$ cells in $K$ is denoted by $\sigma^n$.   The closure of $\sigma^n$ (denoted by $\cl \sigma^n$) is the image of an n-simplex $\sigma^n$ in a continuous homomorphic map $f:\sigma^n\longrightarrow \cl \sigma^n$.  A Hausdorff space $K$ with a cell decomposition is called a \emph{CW complex}~\cite{Whitehead1949BAMS-CWtopology} (briefly, \emph{complex}) in which the following conditions hold.
\begin{enumerate}
\item For each cell $\sigma\in K$, $\cl A$ intersects a finite number of other cells (\emph{Closure finiteness}).
\item A cell $A\in K$ is closed, provided $A\cap \cl \sigma^n\neq \emptyset$ is also closed  (\emph{Weak topology}).
\end{enumerate}
The \emph{$n$-skeleton} $K^{n}$ is the union of all $\sigma^j \in K$, such that $j \leq n$. 
A \emph{fibre bundle} $(E,B, \pi, F)$ is a structure, where $E$ is the \emph{total space}, $B$ is the \emph{base space}, $\pi:E \rightarrow B$ is a continuous surjection on $E$ onto $B$ and $F\subset E$ is called the \emph{fibre}. A \emph{region based probe function} $\phi:2^K \rightarrow \mathbb{R}^n$ attaches description to sets.
A \emph{descriptive cell complex} $K_{\Phi}$ is a fibre bundle, $(K_{\Phi},K, \pi, \phi(U))$ on a complex $\sigma^n\in K$, and $U \subset K$.

\begin{example} {Sample Descriptive Cell Complex}.\\
In Fig.~\ref{fig:deshom}, $K$ is a CW complex.  A region-based probe function $\phi:2^K \rightarrow \mathbb{R}$ is used to attach description to each of the cells in $K$ (denoted by $K_{\Phi}$). Probes define a signature for a cell complex~\cite{Peters2017AMSJsignature}.  In $K_{\Phi}$, every cell has a description defined by an appropriate function, as opposed to $K$, where the description is assumed to be constant. The map $\pi:K_{\Phi} \rightarrow K$, is a continuous surjective projection map. Thus, the descriptive  CW complex is a fibre bundle $(K_{\Phi},K, \pi, \phi(U))$, where $U \in K$. The region-based probe is the section of the bundle. Moreover, we illustrate a family of maps $\{\nu_\alpha\;s.t.\; \alpha \in img\,\phi(K)\}$, each of which maps from $K_{\Phi}$ to a CW complex $\hat{K}_\alpha$.
A cycle of $\sigma^1$ is illustrated in blue color. The idea is that $p$-cycle is a chain or a formal sum of $\sigma^p$ which starts and ends at the same point.
Each region of constant description in $K_{\Phi}$ is a descriptive hole and the family of maps $\{\nu_\alpha\}_\alpha$ maps $K_{\Phi}$ in such a way that a particular descriptive hole is projected to a hole (a void) in a abstract CW complex $\acute{K}_\alpha$. 
\qquad \textcolor{blue}{\Squaresteel}
\end{example}

A descriptive cellular complex is represented as $F \rightarrow E \overset{\pi}{\longrightarrow} B$.
The fibre bundle must satisfy the local trivialization condition, \em i.e., in a small neighborhood $U\subset B$, such that $\pi^{-1}(U)$ is homeomorphic to $U \times F$. 
This means that the diagram in Fig.~\ref{fig:triv}, commutes, i.e. each directed path with same endpoints lead to same result. 
Also from this diagram, the function $\phi$ is the \emph{section}, i.e. a continuous map from the base space to the total space, of the descriptive cellular complex $(X_{\Phi},X,pi,\phi(U))$.

\begin{wrapfigure}{l}{0.4\textwidth}
\begin{minipage}[c]{.41\textwidth}
\centering
\includegraphics[width=50mm]{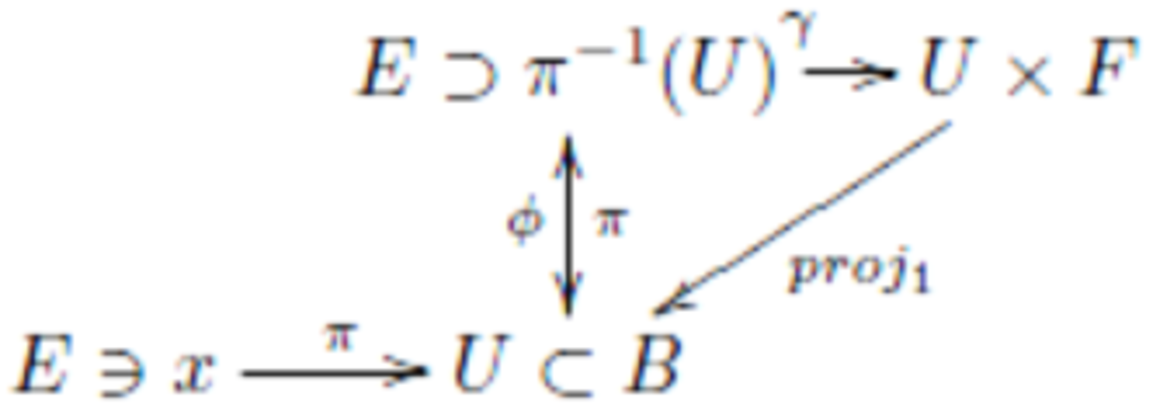}
\caption{Local Trivialization Condition}
\label{fig:triv}
\end{minipage}
\end{wrapfigure}

Algebraic structures are defined over CW approximations of topological spaces.   A \emph{$p$-chain} is a formal sum of $p$-cells($\sigma^p$) in $X$, $c_p=\sum a_i \sigma^p$. 
For computational simplicity, $a_i \in \{0,1\}$. 
Under component-wise addition, the set of $p$-chains would form an abelian group $(C_p^{\sigma},+)$. These are to be distinguished from the simplicial and singular chain complexes $(C_p,+)$. 
\emph{Boundary map} is defined as, $\partial^{\sigma}:\sigma^p_\alpha \longmapsto \sum_\beta d_{\alpha \beta}\sigma_{\beta}^{p-1}$. 
Here $\sigma^p_\alpha \in \{\sigma^p_\alpha\}_\alpha$ is the set of all $\sigma^p \in K$, and $\sigma^p_\beta \in \{\sigma^p_\beta\}_\beta$, is the set of all $\sigma^p \in K$. $d_{\alpha\beta}$ is the degree of the map $f:S_\alpha^{p-1} \rightarrow K^{p-1} \rightarrow S_\beta^{p-1}$. 
Observe that $S_\alpha^{p-1} \rightarrow K^{n-1}$ is the attachment map of $\sigma_\alpha^{p}$ and $K^{p-1} \rightarrow S_\beta^{p-1}$ is the quotient map that collapses $\{K^{p-1}-\sigma_{\beta}\}$ to a point.
The map $f:S_\alpha^{p-1} \rightarrow S_\beta^{p-1}$ induces a homomorphism between infinite cyclic groups as
$f_{\star}:H_n(S_\alpha^{p-1})\rightarrow H_n(S_\beta^{p-1})$ and hence should be of the form $f_{\star}(\alpha)=d\alpha$. The constant $d$ is called the \emph{degree} of the map $f$\cite[\S~ 2.2]{Hatcher2002CUPalgebraicTopology}.
The boundary map is a homomorphism between chain groups, $\partial^{\sigma}:C_{p+1}^{\sigma}\rightarrow C_p^{\sigma}$. 
The group of $p$-cycles is defined as the kernel of boundary homomorphism, $Z_p^{\sigma}=ker\,\partial^{\sigma}$. 
The group of $p$-boundaries are the defined as, $B_p^{\sigma}=img\, \partial^{\sigma}$. 
The \emph{$p^{th}$ homology group} is the quotient group,$H_p^{\sigma}=Z_p^{\sigma}/B_p^{\sigma}$. 
The rank of $H_p^{\sigma}$ is the $p^{th}$ \emph{Betti number}, $\beta_p=rank \,H_p^{\sigma}$.  

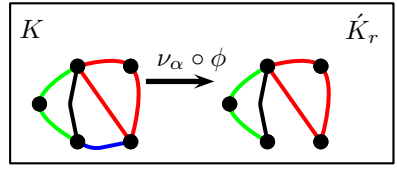
\begin{wrapfigure}[10]{R}{0.45\textwidth}
\begin{minipage}[c]{0.45\textwidth}
\begin{pspicture}(0.3,-0.3)(5.4, 1.6)
\psframe[linecolor=black](0.3,-0.6)(5.37,1.55)
\pscurve[linewidth=1.5pt, linecolor=green](1.2,-0.3)(0.7,0.2)(1.2,0.7)
\psline[linearc=0.2,linewidth=1.5pt](1.2,-0.3)(1.1,0.2)(1.2,0.7)
\psline[linearc=0.35, linewidth=1.5pt, linecolor=blue](1.2,-0.3)(1.4,-0.4)(1.9,-0.3)
\pscurve[linewidth=1.5pt, linecolor=red](1.2,0.7)(1.9,0.7)(1.9,-0.3)
\psline[linearc=0.2,linecolor=red,linewidth=1.5pt](1.2,0.7)(1.9,-0.3)
\pscurve[linewidth=1.5pt, linecolor=green](3.7,-0.3)(3.2,0.2)(3.7,0.7)
\psline[linearc=0.2,linewidth=1.5pt](3.7,-0.3)(3.6,0.2)(3.7,0.7)
\pscurve[linewidth=1.5pt, linecolor=red](3.7,0.7)(4.4,0.7)(4.4,-0.3)
\psline[linearc=0.2,linewidth=1.5pt, linecolor=red](3.7,0.7)(4.4,-0.3)
\psdots[dotstyle=o,dotsize=0.2,fillstyle=solid, fillcolor=black](0.7,0.2)(1.2,0.7)(1.2,-0.3)(1.9,0.7)(1.9,-0.3)
\psdots[dotstyle=o,dotsize=0.2,fillstyle=solid, fillcolor=black](3.2,0.2)(3.7,0.7)(3.7,-0.3)(4.4,0.7)(4.4,-0.3)
\psline[linewidth=2pt,linecolor=black]{->}(2.1,0.5)(3,0.5)
\rput(0.6,1.2){$K$}
\rput(2.7,0.8){$\nu_{\alpha}\circ \phi$}
\rput(4.95,1.2){$\acute{K}_{r}$}
\end{pspicture}
\caption{Descriptive relative cycles}
\label{fig:lem1}
\end{minipage}
\end{wrapfigure}

Next, we establish the connection between singular homology and cellular homology. The chain complexes in cellular homology, $(C_p^{\sigma},+)$ are the relative homology groups $H_n(K^{n},K^{n-1}) \equiv H_n(K^{n}/K^{n-1})$. The underlying chain groups are $(C_p(K^{n})/C_p(K^{n-1}),+)$. The elements in $H_n(K^{n},K^{n-1})$ are represented by \emph{relative chains}, $c_r \in C_n(K^{n})$ such that $\partial c_r \in C_n(K^{n-1})$. The \emph{relative boundaries} are defined as, $c_r =\partial \hat{c}_r +\gamma$ such that $\partial c_r \in C_n(K^{n})$, $\hat{c}_r \in C_{n+1}(K^{n+1})$ and $\gamma \in C_{n-1}{K^{n-1}}  $. The relative boundaries are trivial elements in $H_n(K^{n},K^{n-1})$. Thus, $H_n(K^{n},K^{n-1})$ has $\sigma^{n}\in K$ as its basis. For the details, see A. Hatcher~\cite{Hatcher2002CUPalgebraicTopology}.

\section{Main Results}\label{sec:main_results}

This section introduces the main results for a descriptive cellular homology. Let $\nu_{\alpha}:K_{\Phi} \rightarrow \acute{K}_\alpha$ be a mapping on  a descriptive cellular complex($K_{\Phi}$) into abstract cellular complex($\acute{K}_\alpha$). 
Here, $\acute{K}_\alpha=\{K \setminus \sigma^2 \, s.t. \, \sigma^2 \in K \, and \,  |\phi(\sigma^2)-\alpha| \leq \delta \}$ , and $\delta$ is an arbitrary constant. 
From this definition, $\acute{K}_\alpha \subset K_{\Phi}$, where $\alpha \in img\,\phi$, and $X_\phi=\bigcup_{\alpha} \acute{K}_\alpha$. We define homology groups, $H_{0}^{\alpha}, H_{1}^{\alpha}$ on each of these complexes $\acute{K}_\alpha$. 
These groups require cellular chain groups,$H_n(\hat{K}_{\alpha}^{n},\hat{K}_{\alpha}^{n-1})$ and boundary homomorphisms, $\partial^\sigma$. 
\begin{lemma}\label{lem:subgr}
Let $(K,K_{\Phi},\pi)$ be a descriptive cellular complex, $\{\nu_{\alpha i}:K_{\Phi} \rightarrow \acute{K}_{\alpha i} \, s.t. $ $ \alpha i \in img\,\phi \}$ be a class of maps. 
Then, $H_n(\acute{K}_{\alpha}^{n},\acute{K}_{\alpha}^{n-1})$ is a subgroup of $H_n(K^{n},K^{n-1})$.
\end{lemma}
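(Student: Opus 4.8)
The statement asserts that $H_n(\acute{K}_{\alpha}^{n},\acute{K}_{\alpha}^{n-1})$ is a subgroup of $H_n(K^{n},K^{n-1})$. Recall from the preliminaries that for any CW complex $L$, the relative group $H_n(L^n,L^{n-1})$ is free abelian with basis the $n$-cells $\sigma^n \in L$. So the plan is to identify both sides with free abelian groups on cell-sets and exhibit the left-hand one as generated by a subset of the generators of the right-hand one. The key structural fact to use is the definition $\acute{K}_\alpha=\{K \setminus \sigma^2 \ s.t.\ \sigma^2 \in K \ \text{and}\ |\phi(\sigma^2)-\alpha| \leq \delta\}$, which means $\acute{K}_\alpha$ is obtained from $K$ by deleting certain $2$-cells; hence the set of cells of $\acute{K}_\alpha$ is a subset of the set of cells of $K$, and likewise in each dimension $\acute{K}_\alpha^{n}\subseteq K^{n}$.

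First I would check that $\acute{K}_\alpha$ is itself a legitimate CW complex (a subcomplex of $K$), so that the cellular chain apparatus applies to it: deleting the open $2$-cells in question from $K$ leaves the lower skeleta intact, and closure-finiteness and the weak topology are inherited by subcomplexes. Second, I would invoke the basis description: $H_n(K^{n},K^{n-1})$ is free abelian on $\{\sigma^n : \sigma^n \in K\}$ and $H_n(\acute{K}_{\alpha}^{n},\acute{K}_{\alpha}^{n-1})$ is free abelian on $\{\sigma^n : \sigma^n \in \acute{K}_\alpha\}$. Third, since $\acute{K}_\alpha^{n}\subseteq K^{n}$ for every $n$, every $n$-cell of $\acute{K}_\alpha$ is an $n$-cell of $K$, so the natural inclusion of cell-sets induces an injective homomorphism of the corresponding free abelian groups, realizing $H_n(\acute{K}_{\alpha}^{n},\acute{K}_{\alpha}^{n-1})$ as the subgroup of $H_n(K^{n},K^{n-1})$ spanned by those basis elements coming from cells surviving in $\acute{K}_\alpha$. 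Concretely, the inclusion $\acute{K}_\alpha \hookrightarrow K$ of pairs $(\acute{K}_\alpha^n,\acute{K}_\alpha^{n-1})\hookrightarrow(K^n,K^{n-1})$ is cellular and induces the desired monomorphism; one checks it is injective because it sends distinct basis cells to distinct basis cells.

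The main obstacle, and the step deserving the most care, is verifying that $\acute{K}_\alpha$ really is a subcomplex of $K$ in the CW sense — i.e. that removing the selected $2$-cells does not destroy closure-finiteness or force the removal of cells needed as attaching targets for higher cells. In the setting of this paper the relevant complexes are low-dimensional (only $0$-, $1$-, and $2$-cells appear), so there are no $3$-cells attached along the deleted $2$-cells, and the issue is benign; but I would state explicitly that $\acute{K}_\alpha$ contains the closure of each of its cells, which holds because the $0$- and $1$-cells of $K$ are all retained. A secondary subtlety is that "subgroup" here should be read up to the canonical isomorphism induced by inclusion, so I would phrase the conclusion as: the inclusion-induced map $H_n(\acute{K}_{\alpha}^{n},\acute{K}_{\alpha}^{n-1})\to H_n(K^{n},K^{n-1})$ is a monomorphism, whose image is the free abelian subgroup generated by $\{\sigma^n \in \acute{K}_\alpha\}$. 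This also immediately yields that the cellular boundary maps restrict compatibly, which is the observation needed for the subsequent construction of the homology groups $H_0^\alpha, H_1^\alpha$.
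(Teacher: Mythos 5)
Your proof is correct and follows essentially the same route as the paper's: both rest on the observation that the cells of $\acute{K}_\alpha$ form a subset of the cells of $K$, so that $H_n(\acute{K}_{\alpha}^{n},\acute{K}_{\alpha}^{n-1})$, taken as the free abelian group on those surviving cells, sits inside $H_n(K^{n},K^{n-1})$. The only difference is presentational: the paper verifies the subgroup axioms (identity, inverses, closure under chain addition) element-by-element, while you invoke the free-basis description and the inclusion-induced monomorphism directly, and you additionally check that $\acute{K}_\alpha$ is a genuine CW subcomplex --- a point the paper leaves implicit.
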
 
\begin{proof}
As $\acute{K}_{\alpha i}\subseteq K$, all $\acute{\sigma}^p \in \acute{K}_{\alpha i}$ are included in $K$. Hence, $H_n(\acute{K}_{\alpha}^{n},\acute{K}_{\alpha}^{n-1})$, with basis $\acute{\sigma^p} \in \acute{K}_{\alpha i}$ is a subset of $H_n(K^{n},K^{n-1})$, a basis $\sigma^p \in K$. Equipped with a chain addition, we can identify the identity element. The identity element in $H_n(\acute{K}_{\alpha}^{n},\acute{K}_{\alpha}^{n-1})$ can be constructed by setting the weight of each $\acute{\sigma}^p \in \acute{K}_{\alpha i}$  equal to $0$. This identity can be seen as the result of restriction of the identity of $H_n(K^{n},K^{n-1})$ to only those $\sigma \in K$ which are also $\acute{\sigma}^n \in \acute{K}_{\alpha i}$. The inverse of each element $c=\sum_i a_i\acute{\sigma}^n_i \in H_n(\acute{K}_{\alpha}^{n},\acute{K}_{\alpha}^{n-1})$ is $\acute{c}=\sum_i -a_i\acute{\sigma}^n_i$. The inverse of each element is by definition in $H_n(\acute{K}_{\alpha}^{n},\acute{K}_{\alpha}^{n-1})$ as it is a formal sum of $\acute{\sigma}^n$. Assume two chains $c=\sum_j a_j \acute{\sigma}^n$ and $\acute{c}=\sum_j b_j \acute{\sigma}^n$, then $c+\acute{c}=\sum_j (a_j+b_j)\acute{\sigma}^n$, By definition $c+\acute{c} \in H_n(\acute{K}_{\alpha}^{n},\acute{K}_{\alpha}^{n-1})$ as it is a formal sum of $\acute{\sigma}^n \in \acute{K}_{\alpha i}$. Thus, $H_n(\acute{K}_{\alpha}^{n},\acute{K}_{\alpha}^{n-1})$ is closed under chain addition. It can be seen from the definition of the cellular boundary map that coefficients of $\acute{\sigma}^n$ are integers. Thus associativity of chain addition in $H_n(\acute{K}_{\alpha}^{n},\acute{K}_{\alpha}^{n-1})$ follows from the associativity of integers, $\mathbb{Z}$. As $H_n(\acute{K}_{\alpha}^{n},\acute{K}_{\alpha}^{n-1}) \subseteq H_n(K^{n},K^{n-1})$ and $H_n(\acute{K}_{\alpha}^{n},\acute{K}_{\alpha}^{n-1})$ is it self a group. Hence, $H_n(\acute{K}_{\alpha}^{n},\acute{K}_{\alpha}^{n-1})$ is a subgroup of $H_n(K^{n},K^{n-1})$.   
\end{proof}

\begin{example}
Let us look at an illustration in Fig.~\ref{fig:lem1} to clarify Lemma~\ref{lem:subgr}. The cellular complex $K$ is a union of $\sigma^1$. 
The probe function is chosen as the curvature. 
Thus, using the function $\nu_{\alpha} \circ \phi$ we can construct $\acute{K}_{\alpha}$, by removing the blue arc from $K$. 
We can see that the cycle in red, $cyc \acute{A} \in \acute{K}_{\alpha}$ is also a cycle in $cyc A \in K$. 
The same is true for the arc in black i.e. $arc \acute{B}\in \acute{K}_{\alpha} \simeq arc B \in K$, and for the arc in green i.e. $arc \acute{C}\in \acute{K}_{\alpha} \simeq arc C\in K$. Thus all the arcs and the cycles in $\acute{K}_{\alpha}$ are in $K$, as all the $\sigma^1 \in \acute{K}_{\alpha} \simeq \sigma^1 \in K$. 
Since $H_1(\acute{K}_\alpha^1,\acute{K}_\alpha^0)$ is a formal sum of $\sigma^1 \in \acute{K}_{\alpha}$ and $H_1(K^1,K^0)$ is a formal sum of $\sigma^1 \in K$, $H_1(\acute{K}_\alpha^1,\acute{K}_\alpha^0) \subseteq H_1(K^1,K^0)$. 
We just have to proof that $H_1(\acute{K}_\alpha^1,\acute{K}_\alpha^0)$ is itself a group as in the proof of Lemma~\ref{lem:subgr}. 
Moreover, we can extend the argument to $H_n(\acute{K}_\alpha^1,\acute{K}_\alpha^0)$.
\qquad \textcolor{blue}{\Squaresteel}
\end{example}

Using Lemma~\ref{lem:subgr}, we show that the Fundamental lemma of homology is satisfied for the chain groups $H_n(\hat{K}_{\alpha}^{n},\hat{K}_{\alpha}^{n-1})$.
\begin{lemma}\label{lem:fund}
Let us consider $H_n(\hat{K}_{\alpha}^{n},\hat{K}_{\alpha}^{n-1})$ to be chain groups with boundary homomorphisms associated as follows:
\begin{align*}
\cdots \overset{\partial_{n+2}^{\sigma}}{\rightarrow} H_{n+1}(\hat{K}_{\alpha}^{n+1},\hat{K}_{\alpha}^{n}) \overset{\partial_{n+1}^{\sigma}}{\rightarrow} H_n(\hat{K}_{\alpha}^{n},\hat{K}_{\alpha}^{n-1})\overset{\partial_n}
{\rightarrow} H_{n-1}(\hat{K}_{\alpha}^{n-1},\hat{K}_{\alpha}^{n-2}) \cdots.
\end{align*}
Then $\partial_{n+1}^\sigma \circ \partial_{n}^\sigma=0$.
\end{lemma}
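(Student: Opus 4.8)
The plan is to reduce the statement $\partial_{n+1}^\sigma \circ \partial_n^\sigma = 0$ to the classical fact for the cellular chain complex of $K$ itself, using Lemma~\ref{lem:subgr} to transport the identity from the ambient complex down to the subcomplex $\acute{K}_\alpha$. First I would recall that by the construction preceding Lemma~\ref{lem:subgr}, $\acute{K}_\alpha$ is obtained from $K$ by deleting a collection of $2$-cells whose probe value lies within $\delta$ of $\alpha$; in particular $\acute{K}_\alpha$ is a genuine CW subcomplex of $K$ (all faces of retained cells are retained, since only top-dimensional cells in the relevant range are removed in the $2$-dimensional picture, and more generally $\acute K_\alpha^{n}$ is a union of skeleta-compatible pieces). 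Consequently the cellular boundary map $\partial^\sigma$ of $\acute{K}_\alpha$ is simply the restriction of the cellular boundary map $\partial^\sigma$ of $K$ to the subgroup $H_n(\acute{K}_\alpha^n,\acute{K}_\alpha^{n-1}) \subseteq H_n(K^n,K^{n-1})$ guaranteed by Lemma~\ref{lem:subgr}.

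Next I would invoke the standard description of the cellular boundary map via degrees, already quoted in the Preliminaries: $\partial^\sigma(\sigma_\alpha^p) = \sum_\beta d_{\alpha\beta}\,\sigma_\beta^{p-1}$ where $d_{\alpha\beta}$ is the degree of the composite $S_\alpha^{p-1}\to K^{p-1}\to S_\beta^{p-1}$ of attaching map and collapse map. The key point is that this composite, and hence the integers $d_{\alpha\beta}$, only depend on the cells of $\acute K_\alpha$ themselves — the attaching map of a cell $\acute\sigma^p$ lands in $\acute K_\alpha^{p-1}$ because $\acute K_\alpha$ is a subcomplex — so the matrices computing $\partial_{n+1}^\sigma$ and $\partial_n^\sigma$ on $\acute K_\alpha$ are literally submatrices of those for $K$. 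Therefore the composite $\partial_{n+1}^\sigma\circ\partial_n^\sigma$ on a basis cell $\acute\sigma^{n+1}$ of $H_{n+1}(\acute K_\alpha^{n+1},\acute K_\alpha^n)$ agrees with the same composite computed inside $K$, which vanishes by the classical fact that the cellular chain complex of a CW complex is a chain complex (equivalently, that $\sum_\beta d_{\alpha\beta}d_{\beta\gamma}=0$; see Hatcher~\cite{Hatcher2002CUPalgebraicTopology}, \S~2.2). Since every element of $H_{n+1}(\acute K_\alpha^{n+1},\acute K_\alpha^n)$ is a formal sum of such basis cells and both boundary maps are homomorphisms, $\partial_{n+1}^\sigma\circ\partial_n^\sigma=0$ on the whole group.

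The main obstacle I anticipate is justifying that $\acute K_\alpha$ really is a CW subcomplex of $K$ — that is, that deleting the prescribed $2$-cells (and, in higher dimensions, whatever cells the definition removes) leaves a closed-under-faces collection whose induced topology is the weak topology. If the removed cells are always of top dimension relative to the local picture, this is immediate; otherwise one must argue that removing a $p$-cell forces removal of every cell having it in its boundary, and check closure-finiteness and the weak-topology axiom are inherited. Once that structural point is secured, the degree-theoretic identity $\partial^2=0$ for $\acute K_\alpha$ is just the restriction of the known identity for $K$, and the proof is essentially bookkeeping: track that the relevant $S_\alpha^{p-1}\to S_\beta^{p-1}$ maps for $\acute K_\alpha$ coincide with those for $K$ on the retained cells, and conclude by linearity. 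I would also remark, as a sanity check, that Lemma~\ref{lem:subgr} already gives the chain groups as subgroups, so nothing new is needed there; the only fresh content is that the differential restricts compatibly, which the subcomplex property delivers.
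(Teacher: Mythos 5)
Your proposal follows essentially the same route as the paper: reduce $\partial_{n+1}^\sigma \circ \partial_n^\sigma=0$ on $\acute{K}_\alpha$ to the classical identity for $K$ (Hatcher, Lemma 2.34) via the subgroup relation of Lemma~\ref{lem:subgr}. The one thing you add --- verifying that $\acute{K}_\alpha$ is a genuine CW subcomplex so that the cellular differential of $K$ actually restricts to the subgroups (i.e.\ the degrees $d_{\alpha\beta}$ toward deleted cells vanish because the attaching maps land in $\acute{K}_\alpha^{p-1}$) --- is precisely the step the paper's proof silently omits, and it is what makes the reduction from ``subgroup'' to ``sub-chain-complex'' legitimate.
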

\begin{proof}
The composition $\partial_{n+1}^\sigma \circ \partial_{n}^\sigma=0$ holds for $H_n(K^n,K^{n-1})$ as a result of Lemma $2.34$~\cite{Hatcher2002CUPalgebraicTopology}. The proof via diagram chasing, is detailed in \cite[\S~2.2, p.139]{Hatcher2002CUPalgebraicTopology}. 

Using this proof as the basis and the Lemma~\ref{lem:subgr} stating that $H_n(\hat{K}_{\alpha}^{n},\hat{K}_{\alpha}^{n-1})$ is a subgroup of $H_n(K^n,K^{n-1})$, it can be concluded that $\partial_{n+1}^\sigma \circ \partial_{n}^\sigma=0$ holds for $H_n(\hat{K}_{\alpha}^{n},\hat{K}_{\alpha}^{n-1})$.    
\end{proof}

\begin{theorem}
$img \partial_{n+1}^{\sigma} \subset ker \partial_{n}^{\sigma}$.
\end{theorem}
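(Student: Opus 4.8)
The plan is to derive the statement directly from Lemma~\ref{lem:fund}, which we have just established. The claim $img\,\partial_{n+1}^{\sigma} \subset ker\,\partial_{n}^{\sigma}$ is precisely the set-theoretic reformulation of the identity $\partial_{n}^{\sigma} \circ \partial_{n+1}^{\sigma} = 0$ for the cellular chain complex associated to $\hat{K}_{\alpha}$; so the entire proof is an unwinding of definitions rather than any new construction.

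First I would take an arbitrary element $y \in img\,\partial_{n+1}^{\sigma}$. By definition of image, there exists a chain $c \in H_{n+1}(\hat{K}_{\alpha}^{n+1},\hat{K}_{\alpha}^{n})$ with $y = \partial_{n+1}^{\sigma}(c)$. Next I would apply $\partial_{n}^{\sigma}$ to $y$ and use Lemma~\ref{lem:fund}: $\partial_{n}^{\sigma}(y) = \partial_{n}^{\sigma}(\partial_{n+1}^{\sigma}(c)) = (\partial_{n}^{\sigma}\circ\partial_{n+1}^{\sigma})(c) = 0$. Hence $y \in ker\,\partial_{n}^{\sigma}$ by definition of kernel. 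Since $y$ was arbitrary, $img\,\partial_{n+1}^{\sigma} \subset ker\,\partial_{n}^{\sigma}$, which completes the argument. I would also note in passing that $ker\,\partial_n^{\sigma} = Z_n^{\sigma}$ is the group of cellular $n$-cycles and $img\,\partial_{n+1}^{\sigma} = B_n^{\sigma}$ is the group of cellular $n$-boundaries, so the statement says every boundary is a cycle, which is exactly what makes the quotient $H_n^{\sigma} = Z_n^{\sigma}/B_n^{\sigma}$ well defined for $\hat{K}_{\alpha}$.

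There is essentially no obstacle here of a genuine mathematical nature; the ``hard part'' was already dispatched in Lemma~\ref{lem:fund}, which itself leans on Hatcher's Lemma~2.34 together with the subgroup structure from Lemma~\ref{lem:subgr}. The only point requiring a word of care is indexing: I would make sure the composition is read in the correct order ($\partial_{n}^{\sigma}$ applied after $\partial_{n+1}^{\sigma}$), so that the chain $c$ lives in the degree-$(n+1)$ relative homology group and $\partial_{n+1}^{\sigma}(c)$ lands in the degree-$n$ group where $\partial_{n}^{\sigma}$ is defined; this matches the diagram displayed in the statement of Lemma~\ref{lem:fund}. Beyond that, the proof is a two-line chase and can be written out in full without any suppressed computation.
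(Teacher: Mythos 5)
Your proposal is correct and follows essentially the same route as the paper: the paper's proof simply asserts that the containment follows from Lemma~\ref{lem:fund}, and you supply the routine element-chase ($y=\partial_{n+1}^{\sigma}(c)$, so $\partial_{n}^{\sigma}(y)=0$) that makes this explicit. Your remark about reading the composition in the order $\partial_{n}^{\sigma}\circ\partial_{n+1}^{\sigma}$, consistent with the displayed sequence rather than the literal ordering written in Lemma~\ref{lem:fund}, is a sensible clarification rather than a departure.
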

\begin{proof}
From Lemma~\ref{lem:fund}, we can conclude that $img \partial_{n+1}^{\sigma} \subset ker \partial_{n}^{\sigma}$. Thus, the sequence  $\cdots \overset{\partial_{n+2}^{\sigma}}{\rightarrow} H_{n+1}(\hat{K}_{\alpha}^{n+1},\hat{K}_{\alpha}^{n}) \overset{\partial_{n+1}^{\sigma}}{\rightarrow} H_n(\hat{K}_{\alpha}^{n},\hat{K}_{\alpha}^{n-1})\overset{\partial_n}{\rightarrow} H_{n-1}(\hat{K}_{\alpha}^{n-1},\hat{K}_{\alpha}^{n-2}) \cdots$ is an \emph{exact sequence}.
\end{proof}

\begin{theorem}
The condition $|\phi(\sigma^2)-\alpha|\leq \delta$ is equivalent to $\phi(\sigma^2) \in \tau_{std}^{\phi}$.
\end{theorem}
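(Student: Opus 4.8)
The plan is to reduce both sides of the stated equivalence to assertions about the description space---the Euclidean space $\mathbb{R}^n$ carrying the standard topology $\tau_{std}$---and then to use the single structural fact that the open balls $B(\alpha,\varepsilon)=\{x\in\mathbb{R}^n : |x-\alpha|<\varepsilon\}$ form a basis for $\tau_{std}$. Before doing anything I would fix the reading of $\tau_{std}^{\phi}$: it is the descriptive topology carried by the cells of $K$ obtained by transporting $\tau_{std}$ through the region-based probe $\phi\colon 2^{K}\to\mathbb{R}^n$, so that a family of cells is $\tau_{std}^{\phi}$-open exactly when it has the form $\{A : \phi(A)\in V\}$ for some $V\in\tau_{std}$, and the symbol ``$\phi(\sigma^2)\in\tau_{std}^{\phi}$'' abbreviates the statement that $\sigma^2$ belongs to such a basic descriptive neighbourhood, i.e. one cut out by a ball around a reference description. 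With this dictionary in place the theorem becomes a comparison of two ways of saying ``the description of $\sigma^2$ lies in a ball.''

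For the implication from the metric condition, assume $|\phi(\sigma^2)-\alpha|\leq\delta$. Then $\phi(\sigma^2)\in\bar{B}(\alpha,\delta)\subseteq B(\alpha,\delta')$ for any $\delta'>\delta$, and $B(\alpha,\delta')\in\tau_{std}$; pulling $B(\alpha,\delta')$ back along $\phi$ produces a $\tau_{std}^{\phi}$-open family of cells containing $\sigma^2$, so $\phi(\sigma^2)\in\tau_{std}^{\phi}$ in the sense fixed above. For the converse, suppose $\phi(\sigma^2)$ lies in a $\tau_{std}^{\phi}$-neighbourhood; unwinding the definition, $\phi(\sigma^2)\in V$ for some open $V\subseteq\mathbb{R}^n$, and by the basis property there is a ball with $\phi(\sigma^2)\in B(\beta,\varepsilon)\subseteq V$. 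Since the constant in the definition of $\acute{K}_{\alpha}$ is arbitrary, I may take $\alpha:=\phi(\sigma^2)$ and $\delta:=\varepsilon$ (or any positive radius), which gives $|\phi(\sigma^2)-\alpha|=0\leq\delta$, the required condition. Concatenating the two implications yields the equivalence.

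The step I expect to be the real obstacle is not the topology but the bookkeeping: because $\phi$ is region-based, $\phi(\sigma^2)$ is the description of an entire $2$-cell, and I must make sure that ``$\sigma^2$ satisfies $|\phi(\sigma^2)-\alpha|\leq\delta$'' and ``$\sigma^2$ lies in a ball-generated element of $\tau_{std}^{\phi}$'' are literally the same assertion rather than merely parallel ones; this forces me to commit to a precise definition of $\tau_{std}^{\phi}$, and it is the only place where anything could go wrong. A minor technical wrinkle is the clash between the closed inequality ``$\leq\delta$'' and the open balls that generate $\tau_{std}$, which I dispose of by the chain $B(\alpha,\delta)\subseteq\bar{B}(\alpha,\delta)\subseteq B(\alpha,\delta')$ together with the arbitrariness of $\delta$, so the passage between ``$\leq$'' and ``$<$'' is immaterial at the level of the generated topology. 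Finally, I would note that nothing in the argument uses that $\sigma^2$ is two-dimensional, so the same equivalence holds for cells of any dimension.
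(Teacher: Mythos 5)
Your proposal takes essentially the same route as the paper: both read $\tau_{std}^{\phi}$ as the ball-generated standard topology on $img\,\phi$ and derive each implication directly from that definition, with your forward direction matching the paper's appeal to $B_{\phi(p)}^{\delta}\subseteq U$ and your converse matching its (unelaborated) claim that the metric bound ``follows from the definition.'' The only difference is one of care, not of strategy --- you make explicit the type-fixing of ``$\phi(\sigma^2)\in\tau_{std}^{\phi}$,'' the closed-versus-open ball wrinkle, and the choice of $\alpha$ and $\delta$ in the converse, all of which the paper's proof leaves implicit.
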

\begin{proof}
For a space $X$, we define a topology $(img \phi(X),\tau_{std}^{\phi})$. Any set  $U \in \tau_{std}^{\phi}$ is defined as: for all $p \in X$, there exists an arbitrary positive real number $\delta$ such that $B_{\phi(p)}^\delta \subseteq U$. Here $B_x^r$ is a ball of radius $r$ and centered on a point $x$. Hence from this definition, we can see that $|\phi(\sigma^2)-\alpha|\leq \delta \Rightarrow \phi(\sigma^2) \in \tau_{std}^{\phi}$. Moreover, the statement $\phi(\sigma^2) \in \tau_{std}^{\phi} \Rightarrow |\phi(\sigma^2)-\alpha|\leq \delta$, also follows from the definition of $\tau_{std}^{\phi}$. From this argument it follows that, $|\phi(\sigma^2)-\alpha|\leq \delta$ is equivalent to $\phi(\sigma^2) \in \tau_{std}^{\phi}$.
\end{proof}

Descriptive homology gives a local view of a space, since it is associated with subgroups $H_n(\hat{K}_{\alpha}^{n},\hat{K}_{\alpha}^{n-1})$ of $H_n(K^{n},K^{n-1})$. 
Next, we establish a relationship between $H_p(K)$ and $H_p^\alpha(K)$. To do this, we assume a standard topology $(img\, \phi(X),\tau_{std}^{\phi})$, and define $\acute{K}_\alpha=\{K \setminus \sigma^2 \, s.t. \, \sigma^2 \in K \, and \,  |\phi(\sigma^2)-\alpha| \leq \delta \}$. The the $p^{th}$ homology group is denoted by $H_p^V$, where $V\in\tau_{std}^{\phi}$.
\begin{theorem}\label{thm:clasrel}
Let $(X_phi,X,\pi, \phi(U))$ be a descriptive cellular complex where $U\in X$. $(img \phi(X),\tau_{std}^{\phi})$ is the topology defined on the fibres.Then,
\begin{align*}
H_p^V \equiv H_p(X)\; when \,V=\bigcup \tau_{std}^{\phi}
\end{align*}
\end{theorem}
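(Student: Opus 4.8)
The plan is to show that when the open set $V$ exhausts the entire image $\phi(X)$ — that is, $V = \bigcup \tau_{std}^{\phi} = img\,\phi(X)$ — the abstract complex $\acute{K}_V$ on which $H_p^V$ is computed coincides with $K$ itself, so that the two homology groups are literally the same (hence isomorphic). First I would unwind the definition $\acute{K}_\alpha = \{K \setminus \sigma^2 \text{ s.t. } \sigma^2 \in K \text{ and } |\phi(\sigma^2) - \alpha| \leq \delta\}$ together with the relation established in Theorem 4, namely that $|\phi(\sigma^2)-\alpha|\leq\delta$ is equivalent to $\phi(\sigma^2)\in\tau_{std}^{\phi}$; this lets me rephrase $\acute{K}_V$ as the subcomplex of $K$ obtained by deleting exactly those $2$-cells whose probe value lies outside $V$. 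When $V$ is the whole union of the topology, no probe value lies outside $V$, so no $2$-cell is deleted and $\acute{K}_V = K$. Consequently the relative chain groups $H_n(\acute{K}_V^n, \acute{K}_V^{n-1})$ agree with $H_n(K^n, K^{n-1})$, the boundary maps $\partial_n^\sigma$ agree, and therefore the homology $H_p^V = Z_p^\sigma / B_p^\sigma$ computed from this chain complex equals $H_p(X)$.

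The key steps, in order, are: (1) record that $X_\phi = \bigcup_\alpha \acute{K}_\alpha$ and that each $\acute{K}_\alpha \subseteq K$ (already noted in the excerpt and in Lemma~\ref{lem:subgr}); (2) invoke Theorem 4 to translate the metric condition on $\delta$ into membership in $\tau_{std}^{\phi}$, so that enlarging $V$ monotonically enlarges $\acute{K}_V$; (3) take $V = \bigcup \tau_{std}^{\phi}$, observe $img\,\phi(X) \subseteq V$ hence equality, and conclude that the deletion set $\{\sigma^2 : \phi(\sigma^2)\notin V\}$ is empty, giving $\acute{K}_V = K$; (4) conclude that the cellular chain complexes coincide, hence $H_p^V \equiv H_p(X)$, possibly citing Lemma~\ref{lem:fund} and the exactness theorem to confirm that the chain complex structure (not just the groups) transports. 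A remark on why $H_p(X)$ — singular homology — equals the cellular homology $H_p^\sigma$ of $K$ should be folded in here, referencing the Hatcher material already cited in the Preliminaries on the equivalence of cellular and singular homology for a CW complex.

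The main obstacle I anticipate is not the set-theoretic bookkeeping but pinning down precisely what ``$\equiv$'' means and what $H_p^V$ is indexed by: the definition given is ``the $p^{th}$ homology group is denoted by $H_p^V$, where $V \in \tau_{std}^{\phi}$'', but a single open set $V$ may meet the probe values of several $\alpha$'s, so one must be careful that $\acute{K}_V$ is well-defined as $K \setminus \{\sigma^2 : \phi(\sigma^2) \notin V\}$ rather than as a union over the $\alpha \in V$ of the individually-defined $\acute{K}_\alpha$ (these two readings can differ, and only the former makes the theorem clean). I would therefore open the proof by fixing that interpretation explicitly, then the remaining argument is a short chain of equalities. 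A secondary subtlety is that $\acute{K}_\alpha$ is defined by removing $2$-cells only, so the identification $\acute{K}_V = K$ really is an equality of CW complexes once $V$ is maximal; I would state this and note that for lower-dimensional skeleta nothing was ever removed, so the claim $H_p^V \equiv H_p(X)$ holds for all $p$ and in particular is consistent with the local/global dichotomy emphasized in the paragraph preceding the theorem.
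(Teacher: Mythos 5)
Your proposal is correct and follows essentially the same route as the paper's own proof: when $V=\bigcup\tau_{std}^{\phi}=img\,\phi(X)$ no $2$-cell is excluded, so $\acute{K}_V=X$ and the cellular homology computed on it is literally $H_p(X)$. Your version is more careful than the paper's (which is three sentences long) in fixing the interpretation of $H_p^V$ and in noting the role of Theorem~4 and the cellular--singular equivalence, but the underlying argument is identical.
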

\begin{proof}
$H_p^V$ is defined as the cellular homology group associated with $\acute{K}_\alpha=\{X \setminus \sigma^p \, s.t. \, \sigma^p \in K \, and \,  |\phi(\sigma^p)-\alpha| \leq \delta \}$. If $V=\bigcup \tau_{std}^{\phi}= img\, \phi(K)$ Then, $\acute{K}_\alpha=X$ as all the two simplices regardless of their description would be included.Thus, by definition it would equal the classical cellular homology group $H_p$ of space $K$.
\end{proof}

\begin{remark}
Each fibre bundle has a local trivialization as illustrated in Fig.~\ref{fig:triv}, and the base $K$ in the case of a cellular complex has intersecting subsets. This raises the question of how the fibre $\phi(U)$ transitions between such intersecting sets. This can be done by associating with $(K_\phi,K,\pi,\phi(U))$ a topological group $G$, which acts continuously on the fiber, $\phi(U)$ from the left.
That is, for $e\in G$, the identity element, $ex=x$, where $x\in \phi(U)$. The notion of continuity requires the group $G$ to be a topological group.
Let us formalize the action of group $G$ on the fibre.
\end{remark}

\begin{theorem}\label{thm:gauge group}
Let $(E,B,\pi,F)$ be the fibre bundle, where $E$ is the total space, $B$ is the base space, $\pi$ is a continuous surjection and $F$ is the fibre.
Let $(U_i, \phi_i)$ and $(U_j, \phi_j)$ be two intersecting sets, $U_i \cap U_j \neq \emptyset$, in $B$ with their sections $\phi_i, \phi_j$. Then the following holds:
\begin{align*}
&\phi_i \circ \phi_j^{-1}: (U_i \cap U_j) \times F \rightarrow (U_i \cap U_j) \times F\\
&\phi_i \circ \phi_j^{-1}(x,f) \longmapsto (x,t_{ij}(x)f) \; s.t. \; t_{ij}:U_i \cap U_j \rightarrow G. 
\end{align*}
\end{theorem}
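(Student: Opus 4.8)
The plan is to unpack the local trivialization maps of the fibre bundle and extract the transition function from the composite $\phi_i \circ \phi_j^{-1}$. First I would recall the local trivialization condition stated before Fig.~\ref{fig:triv}: for each $U_k \subset B$ there is a homeomorphism $\phi_k : \pi^{-1}(U_k) \to U_k \times F$ commuting with the projection onto the first factor, i.e.\ $\mathrm{pr}_1 \circ \phi_k = \pi$ on $\pi^{-1}(U_k)$. Restricting both $\phi_i$ and $\phi_j$ to the overlap $\pi^{-1}(U_i \cap U_j)$ and noting $U_i \cap U_j \neq \emptyset$, the composite $\phi_i \circ \phi_j^{-1}$ is a well-defined homeomorphism of $(U_i \cap U_j) \times F$ onto itself.

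Next I would show this composite fixes the base coordinate. Since $\mathrm{pr}_1 \circ \phi_i = \pi = \mathrm{pr}_1 \circ \phi_j$ on the overlap, applying $\phi_j^{-1}$ to a point $(x,f)$ and then $\phi_i$ cannot change the first coordinate; hence $\phi_i \circ \phi_j^{-1}(x,f) = (x, g_{ij}(x,f))$ for some continuous map $g_{ij}$. Then I would fix $x$ and argue that $f \mapsto g_{ij}(x,f)$ is a homeomorphism of $F$ — it is the restriction of the homeomorphism $\phi_i \circ \phi_j^{-1}$ to the slice $\{x\} \times F$, with continuous inverse coming from $\phi_j \circ \phi_i^{-1}$. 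Invoking the structure group $G$ introduced in the preceding remark, which acts continuously on $F$ from the left, this slice homeomorphism is realized by a unique group element; define $t_{ij}(x) \in G$ by $g_{ij}(x,f) = t_{ij}(x) f$. Finally I would check that $t_{ij} : U_i \cap U_j \to G$ is continuous, using continuity of $g_{ij}$ together with continuity of the $G$-action, which yields exactly the claimed formula $\phi_i \circ \phi_j^{-1}(x,f) \longmapsto (x, t_{ij}(x) f)$.

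The step I expect to be the main obstacle is the passage from "the slice map $f \mapsto g_{ij}(x,f)$ is a homeomorphism of $F$" to "it is given by left multiplication by a well-defined element $t_{ij}(x) \in G$". This requires that $G$ act on $F$ not merely continuously but effectively (faithfully) and transitively enough that each admissible fibre homeomorphism corresponds to a unique group element — in the bundle-theoretic setting this is precisely the defining condition that $G$ is the \emph{structure group} of the bundle, so I would state it as the standing hypothesis inherited from the remark. The continuity of $x \mapsto t_{ij}(x)$ is then the secondary technical point, handled by choosing a point $f_0 \in F$, observing $t_{ij}(x) f_0 = g_{ij}(x,f_0)$ depends continuously on $x$, and using that $G \to F$, $g \mapsto g f_0$, together with the effectiveness of the action, lets one recover $t_{ij}(x)$ continuously from $g_{ij}(x, \cdot)$.
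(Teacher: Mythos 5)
Your proposal is correct and follows the same overall route as the paper's proof---local trivializations over $U_i$ and $U_j$, restriction to the overlap, the composite $\phi_i\circ\phi_j^{-1}$, and identification of the second-coordinate action with an element of the structure group $G$. The difference is one of completeness rather than strategy: the paper's proof stops at asserting that the composite ``can be defined as'' $(x,f)\mapsto(x,t_{ij}(x)f)$ with $t_{ij}\in G$, citing the definition of the gauge group, whereas you actually supply the two steps that make this assertion true. First, you derive the preservation of the base coordinate from $\mathrm{pr}_1\circ\phi_i=\pi=\mathrm{pr}_1\circ\phi_j$ on $\pi^{-1}(U_i\cap U_j)$, which the paper never states (indeed the paper writes the trivializations as maps $U_i\to U_i\times F$ rather than $\pi^{-1}(U_i)\to U_i\times F$, and the theorem statement calls the $\phi_i$ ``sections,'' a conflation you silently and correctly repair). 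Second, you isolate the real content of the theorem: that the slice homeomorphism $f\mapsto g_{ij}(x,f)$ is realized by a unique group element requires the $G$-action on $F$ to be effective, i.e.\ precisely the hypothesis that $G$ is the structure group; the paper treats this as definitional and does not flag it. Your additional verification that $x\mapsto t_{ij}(x)$ is continuous is likewise absent from the paper. In short, your argument is the standard bundle-theoretic one that the paper gestures at, carried out in full; it buys a proof in which the role of effectiveness of the $G$-action is explicit, at the cost of needing that hypothesis stated rather than inherited implicitly from the citation.
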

\begin{proof}
The fibre bundle satisfies local trivialization condition.  Consequently, $\phi_i:U_i \rightarrow U_i \times F$ and $\phi_j:U_j \rightarrow U_j \times F$. 
Since, we assume $U_j \cap U_i \neq \emptyset$, there are two different trivializations for the region $U_i \cap U_j$. 
The map $\phi_j^{-1}:U_j \times F \rightarrow U_j$. 
Since a region $U_i \cap U_j$  has two trivializations in the fibre bundle, we can shift between the two. 
Thus, we can define a composition map, $\phi_i \circ \phi_j^{-1}: (U_i \cap U_j) \times F \rightarrow (U_i \cap U_j) \times F$. 
Such a map can be defined as $\phi_i \circ \phi_j^{-1}(x,f) \longmapsto (x,t_{ij}(x)f)$. In this case $t_{ij}\in G$ and $G$ is the structure group or the gauge group defined in \cite{FelixOprea2009PAMSgaugeGroups}.
\end{proof}

\begin{figure}
\vspace{-5mm}
\begin{subfigure}[Changing temperature of regions in space ]
{
\begin{pspicture}(0,-0.95)(5.4, 3.05)
\psframe[linecolor=black](0.05,-0.95)(5.37,3.05)
\psccurve[linewidth=1.5pt, fillstyle=solid, fillcolor=gray!20](0.5,-0.3)(0.5,0.5)(1.3,-0.3)
\psccurve[linewidth=1.5pt,fillstyle=solid, fillcolor=gray](1.3,-0.3)(1.3,0.5)(0.5,0.5)
\psccurve[linewidth=1.5pt, fillstyle=solid, fillcolor=green](0.5,1.7)(0.5,2.5)(1.3,1.7)
\psccurve[linewidth=1.5pt,fillstyle=solid, fillcolor=red](1.3,1.7)(1.3,2.5)(0.5,2.5)
\psccurve[linewidth=1.5pt, fillstyle=solid, fillcolor=gray!20](2.3,-0.3)(2.3,0.5)(3.1,-0.3)
\psccurve[linewidth=1.5pt,fillstyle=solid, fillcolor=gray](3.1,-0.3)(3.1,0.5)(2.3,0.5)
\psccurve[linewidth=1.5pt, fillstyle=solid, fillcolor=green](2.3,1.7)(2.3,2.5)(3.1,1.7)
\psccurve[linewidth=1.5pt,fillstyle=solid, fillcolor=yellow](3.1,1.7)(3.1,2.5)(2.3,2.5)
\psccurve[linewidth=1.5pt, fillstyle=solid, fillcolor=gray!20](4.1,-0.3)(4.1,0.5)(4.9,-0.3)
\psccurve[linewidth=1.5pt,fillstyle=solid, fillcolor=gray](4.9,-0.3)(4.9,0.5)(4.1,0.5)
\psccurve[linewidth=1.5pt, fillstyle=solid, fillcolor=green](4.1,1.7)(4.1,2.5)(4.9,1.7)
\psccurve[linewidth=1.5pt,fillstyle=solid, fillcolor=green](4.9,1.7)(4.9,2.5)(4.1,2.5)
\psdots[dotstyle=o,dotsize=0.2,fillstyle=solid, fillcolor=black](0.5,-0.3)(0.5,0.5)(1.3,-0.3)(1.3,0.5)
\psdots[dotstyle=o,dotsize=0.2,fillstyle=solid, fillcolor=black](2.3,-0.3)(2.3,0.5)(3.1,-0.3)(3.1,0.5)
\psdots[dotstyle=o,dotsize=0.2,fillstyle=solid, fillcolor=black](4.1,-0.3)(4.1,0.5)(4.9,-0.3)(4.9,0.5)
\psdots[dotstyle=o,dotsize=0.2,fillstyle=solid, fillcolor=black](0.5,1.7)(0.5,2.5)(1.3,1.7)(1.3,2.5)
\psdots[dotstyle=o,dotsize=0.2,fillstyle=solid, fillcolor=black](2.3,1.7)(2.3,2.5)(3.1,1.7)(3.1,2.5)
\psdots[dotstyle=o,dotsize=0.2,fillstyle=solid, fillcolor=black](4.1,1.7)(4.1,2.5)(4.9,1.7)(4.9,2.5)
\psline[linewidth=1.5pt]{->}(1,1.45)(1,0.65)
\psline[linewidth=1.5pt]{->}(0.6,0.7)(0.6,1.5)
\psline[linewidth=1.5pt]{->}(2.8,1.45)(2.8,0.65)
\psline[linewidth=1.5pt]{->}(2.4,0.7)(2.4,1.5)
\psline[linewidth=1.5pt]{->}(4.6,1.45)(4.6,0.65)
\psline[linewidth=1.5pt]{->}(4.2,0.7)(4.2,1.5)
\rput(1.2,1){$\pi$}
\rput(0.4,1){$\phi$}
\rput(3,1){$\pi$}
\rput(2.2,1){$\phi$}
\psline[linewidth=1.5pt]{->}(1.55,2)(2.15,2)
\psline[linewidth=1.5pt]{->}(3.35,2)(3.95,2)
\psline[linewidth=1.5pt]{->}(0.5,-0.75)(4.3,-0.75)
\rput(4.8,1){$\pi$}
\rput(4,1){$\phi$}
\rput(0.9,2.82){\small $K_\Phi(\theta_i)$}
\rput(2.7,2.82){\small $K_\Phi(\theta_j)$}
\rput(4.5,2.82){\small $K_\Phi(\theta_k)$}
\rput(1.8,2.3){\small $\psi_{ji}$}
\rput(3.6,2.3){\small $\psi_{kj}$}
\rput(4.8,-0.7){\small time}
\end{pspicture}
\label{subfig:temp_persist}
}
\end{subfigure}
\begin{subfigure}[Changing area of regions in space ]
{
\begin{pspicture}(0,-0.95)(5.4, 3.05)
\psframe[linecolor=black](0.05,-0.95)(5.37,3.05)
\psccurve[linewidth=1.5pt, fillstyle=solid, fillcolor=gray!20](0.5,-0.3)(0.5,0.5)(1.3,-0.3)
\psccurve[linewidth=1.5pt,fillstyle=solid, fillcolor=gray](1.3,-0.3)(1.3,0.5)(0.5,0.5)
\psccurve[linewidth=1.5pt, fillstyle=solid, fillcolor=green](0.5,1.7)(0.5,2.5)(1.3,1.7)
\psccurve[linewidth=1.5pt,fillstyle=solid, fillcolor=red](1.3,1.7)(1.3,2.5)(0.5,2.5)
\psccurve[linewidth=1.5pt, fillstyle=solid, fillcolor=gray!20](2.3,-0.3)(2.3,0.5)(3.1,-0.3)
\psccurve[linewidth=1.5pt,fillstyle=solid, fillcolor=gray](3.1,-0.3)(3.1,0.5)(2.3,0.5)
\psccurve[linewidth=1.5pt, fillstyle=solid, fillcolor=green](2.3,1.7)(2.5,2.5)(3.1,1.7)
\psccurve[linewidth=1.5pt,fillstyle=solid, fillcolor=red](3.1,1.7)(3.1,2.5)(2.5,2.5)
\psccurve[linewidth=1.5pt, fillstyle=solid, fillcolor=gray!20](4.1,-0.3)(4.1,0.5)(4.9,-0.3)
\psccurve[linewidth=1.5pt,fillstyle=solid, fillcolor=gray](4.9,-0.3)(4.9,0.5)(4.1,0.5)
\psccurve[linewidth=1.5pt, fillstyle=solid, fillcolor=green](4.1,1.7)(4.3,2.5)(5.1,1.9)
\psccurve[linewidth=1.5pt,fillstyle=solid, fillcolor=red](5.1,1.9)(4.9,2.5)(4.3,2.5)
\psdots[dotstyle=o,dotsize=0.2,fillstyle=solid, fillcolor=black](0.5,-0.3)(0.5,0.5)(1.3,-0.3)(1.3,0.5)
\psdots[dotstyle=o,dotsize=0.2,fillstyle=solid, fillcolor=black](2.3,-0.3)(2.3,0.5)(3.1,-0.3)(3.1,0.5)
\psdots[dotstyle=o,dotsize=0.2,fillstyle=solid, fillcolor=black](4.1,-0.3)(4.1,0.5)(4.9,-0.3)(4.9,0.5)
\psdots[dotstyle=o,dotsize=0.2,fillstyle=solid, fillcolor=black](0.5,1.7)(0.5,2.5)(1.3,1.7)(1.3,2.5)
\psdots[dotstyle=o,dotsize=0.2,fillstyle=solid, fillcolor=black](2.3,1.7)(2.5,2.5)(3.1,1.7)(3.1,2.5)
\psdots[dotstyle=o,dotsize=0.2,fillstyle=solid, fillcolor=black](4.1,1.7)(4.3,2.5)(5.1,1.9)(4.9,2.5)
\psline[linewidth=1.5pt]{->}(1,1.45)(1,0.65)
\psline[linewidth=1.5pt]{->}(0.6,0.7)(0.6,1.5)
\psline[linewidth=1.5pt]{->}(2.8,1.45)(2.8,0.65)
\psline[linewidth=1.5pt]{->}(2.4,0.7)(2.4,1.5)
\psline[linewidth=1.5pt]{->}(4.6,1.45)(4.6,0.65)
\psline[linewidth=1.5pt]{->}(4.2,0.7)(4.2,1.5)
\rput(1.2,1){$\pi$}
\rput(0.4,1){$\phi$}
\rput(3,1){$\pi$}
\rput(2.2,1){$\phi$}
\psline[linewidth=1.5pt]{->}(1.55,2)(2.15,2)
\psline[linewidth=1.5pt]{->}(3.35,2)(3.95,2)
\psline[linewidth=1.5pt]{->}(0.5,-0.75)(4.3,-0.75)
\rput(4.8,1){$\pi$}
\rput(4,1){$\phi$}
\rput(0.9,2.82){\small $K_\Phi(\theta_i)$}
\rput(2.7,2.82){\small $K_\Phi(\theta_j)$}
\rput(4.5,2.82){\small $K_\Phi(\theta_k)$}
\rput(1.8,2.3){\small $\psi_{ji}$}
\rput(3.6,2.3){\small $\psi_{kj}$}
\rput(4.8,-0.7){\small time}
\end{pspicture}
\label{subfig:area_persist}
}
\end{subfigure}
\vspace{-5mm}
\caption{Persistence over time in CW complexes.} 
\label{fig:persist}
\end{figure}

In Theorem~\ref{thm:gauge group}, $t_{ij}$ is the \emph{transition function} and $G$ is the \emph{structure group} or the \emph{gauge group}. Since $G$ is a group of transition functions $t_{ij}\in G$, $t_{ij}$ must satisfy certain conditions. That is, $t_{ii}=1$ is the identity element and $t_{ij}t_{ji}=1$ gives the inverse of each element. Moreover, there is a group operation such that $t_{ik}=t_{ij}t_{jk}$.  If there is no other $g\in G$ except the identity, such that $gx=x$ for all $f \in F$, then $G$ is a group of homeomorphisms on $F$.

\section{Application: Persistence in CW Spaces}.\\
We illustrate Theorem~\ref{thm:gauge group} and its implications, using an expansion of Fig.\ref{fig:deshom} shown in Fig.~\ref{fig:persist}. The probe function $\phi:2^K \rightarrow \mathbb{R}^2$, maps each region to a feature vector, $[temperature, area]$.   Consider, for example, two subsets $U_i, U_j\in 2^K$such that $U_i\cap U_j \neq \emptyset $.    $U_i, U_j$ are represented by two planar regions in Fig.~\ref{fig:persist}, namely, $U_i$ in gray and $U_j$ in light gray are two regions in the base space $K$, which have been colored for the sake of distinction. The intersection in this case is the shared $\sigma^1$.   We assume that the functions $\phi$ and $\pi$ change with respect to the value of a parameter, $\{\theta_i\}_i$.

The parameter $\{\theta_i\}_i$ can be thought of as time or spatial location.   In this example, we consider $\{\theta_i\}_i$ to be time.
We will consider the changes in each of the components of the feature vector, namely temperature (Fig.~\ref{subfig:temp_persist}) and area (Fig.~\ref{subfig:area_persist}) separately.
Using the local trivialization condition for $\theta_i$, the fibre for the region $U_i$, $\phi(U_i)=[green,0.25]$  and the fibre for region $U_j$ is $\phi(U_j)=[red,0.75]$.
The temperature is represented by color, where temperature $red > yellow >green$. 
Thus, Theorem~\ref{thm:gauge group} states that the transition of the description from the region $U_i$ to $U_j$ across the common intersection is governed by transition functions that form a group $G$.   Next, consider $U_i,U_j$ for different values of the parameters, {\em namely}, $\theta_j, \theta_k$. 

Let us first look at Fig.~\ref{subfig:temp_persist}, in which the areas remain unchanged while the temperature of $U_i$ decreases as it changes from red to yellow and then to green. 
In keeping with changing temperature, the transition functions that describe the changes across $U_i \cap U_j$ also change. This leads to a change in the group $G$ with the value of the temperature parameter.
Thus, a change in $G$ is an indicator of transitions in the description of the topological space with respect to the value of the parameters. 
Again, for example, consider Fig.~\ref{subfig:area_persist} in which the area of the $U_i$ reduces with time while the area of $U_j$ remains the the same. This also results in a change in the transition function values and hence in the group $G$.

We can combine these observations in the study of persistence with a shape signature introduced in \cite[\S 2.5]{Peters2017AMSJsignature}, to develop shape signatures for topological spaces based on the description. Persistence in CW spaces focuses on the stability of topological signatures with respect to one or more parameters. 


\bibliographystyle{amsplain}
\bibliography{NSrefs}

\end{document}